\documentclass[11pt]{article}
\usepackage[letterpaper,margin=1in]{geometry}
\usepackage{amsmath,amssymb,amsthm,mathtools}
\usepackage{booktabs,tabularx,array}
\usepackage{braket}
\usepackage{algorithm,algpseudocode}
\usepackage{placeins}
\newcolumntype{Y}{>{\raggedright\arraybackslash}X}
\usepackage{microtype}
\usepackage{enumitem}
\usepackage{needspace}
\usepackage[hidelinks]{hyperref}
\usepackage{fancyhdr}
\usepackage{indentfirst}
\usepackage{mathtools}
\mathtoolsset{showonlyrefs=true}
\numberwithin{equation}{section}
\newtheorem{proposition}{Proposition}[section]
\newtheorem{lemma}[proposition]{Lemma}
\newtheorem{theorem}[proposition]{Theorem}
\theoremstyle{remark}

\newcommand{\R}{\mathbb R}
\newcommand{\bbS}{\mathbb S}
\newcommand{\cI}{\mathcal I}
\newcommand{\cJ}{\mathcal J}
\newcommand{\cU}{\mathcal U}
\newcommand{\cG}{\mathcal G}
\newcommand{\cO}{\mathcal O}

\newcommand{\xu}{\overline{\mathbf x}}
\newcommand{\vx}{\mathbf x}

\newcommand{\vy}{\mathbf y}

\newcommand{\vq}{\mathbf q}
\newcommand{\ve}{\mathbf e}

\newcommand{\Argmin}{\operatorname*{Arg\,min}}

\setlist{leftmargin=*,itemsep=3pt,topsep=5pt}
\allowdisplaybreaks[1]
\title{An $\cO(1/T^3)$ algorithm for minimizing convex quadratic functions over the $L_1$ ball}
\author{Yuyuan Ouyang\thanks{School of Mathematical and Statistical Sciences, Clemson University, Clemson, SC 29634. Email: \url{yuyuano@clemson.edu}}}
\date{\today}
\begin{document}
\maketitle
\vspace{-1.5em}
\begin{abstract}
We study convex quadratic minimization over the unit $L_1$ ball in which the maximum eigenvalue of the Hessian matrix is bounded by a positive constant $L$. We propose a novel
 first-order algorithm with objective value error bounded by 
$\cO(L/T^3)$ after $T$ gradient evaluations, assuming that the subproblems involved in the algorithm can be solved exactly. To the best of our knowledge, the best convergence rate of algorithms in the literature is $\cO(L/T^2)$. 
From the perspective of information-based complexity theory, our proposed algorithm is the first in the literature that achieves the $\mathcal O((L/\varepsilon)^{1/3})$  first-order oracle complexity, although its current version is not necessarily practical for implementation. We hope that our proposed algorithm could shed some light on future implementable and efficient $\cO(L/T^3)$-convergence-rate algorithms. 
The proposed algorithm incorporates a decomposition of components of vectors in the unit $L_1$-norm ball to ``good'' and ``bad'' parts, and uses symmetric rank-1 (SR1) updates on the bad parts. 
The proposed algorithm was developed after the author instructed the OpenAI ChatGPT 6 (Astra) model to study the problem using ideas of weak-type $L^1$ estimates and good-bad part decomposition in harmonic analysis and a recent result in \cite{ouyang2026dimension}.
\end{abstract}

\section{Introduction}
\label{sec:introduction}

In this paper, our problem of interest is the following convex quadratic program:
\begin{equation}\label{eq:problem}
 f_*
 =\min_{\vx\in X}f(\vx):=\frac12\vx^\top Q\vx-\vq^\top\vx,\text{ where }X=\{\vx\in\R^n:\|\vx\|_1\le1\}\text{ and }
 0\preceq Q\preceq LI. 
\end{equation}
Here $Q\in\bbS^n$, $\vq\in\R^n$, and $L>0$.
We study first-order algorithms that solve the above problem, and analyze the efficiency of algorithms through the total number of oracle calls in order to compute an approximate solution $\xu$ such that $f(\xu) - f_*\le\varepsilon$. Our focus is on first-order methods, i.e., any oracle call should be equivalent to an inquiry on the gradient $\nabla f$. Note that we are restricting to unit ball for convenience; it is straightforward to extend our method to problems on $L_1$-norm balls of any size by rescaling the problem. 

Throughout this paper, we use the following equivalent oracle-call definition: the algorithm knows $\vq$, the oracle is a matrix-vector multiplication map $\vx\mapsto Q\vx$, and the total number of oracle calls is the total number of evaluations of such map. Our oracle call assumption is equivalent to the oracle call assumption counted by gradient evaluations, since any algorithm could always acquire the vector $\vq$ by computing $-\nabla f(0)$ and then subsequent gradient evaluations are equivalent to the matrix-vector multiplication map. We will assume a high-dimensional regime in which $n\gg T_{\max}$, where $T_{\max}$ is the largest number of oracle calls an algorithm could acquire. In addition to oracle calls, we do not make assumptions on how the algorithms would manipulate their acquired information, perform their iterations, and the arithmetic complexity of any operations the algorithms perform. For example, if we allow the $T_{\max}=n$ case that is outside of our high-dimensional regime, then under our assumption there exists a trivial algorithm for solving the problem: make inquiries $\ve^1,\ldots,\ve^{n}$ where $\ve^j$ are standard basis. With the acquired oracle outputs $Q\ve^1,\ldots,Q\ve^{n}$, the algorithm knows exactly what the matrix $Q$ is and could call an accurate solver to compute the exact solution to problem \eqref{eq:problem}. Here calling the accurate solver is acceptable, since our main focus of our desired complexity is on the information-based complexity, i.e., how many oracle calls are needed to extract enough information from $Q$ to compute an $\varepsilon$-approximate solution. The case when $T_{\max}=n$ then becomes a trivial one since the full matrix $Q$ can be recovered.

The performance limit of first-order methods on minimizing convex quadratic functions has been studied in the literature. It is shown in \cite{nemirovski1992information} that when $X$ is the unit $L_2$-norm ball, there exist worst-case problems such that the rate of convergence of any first-order method could not be better than $\cO(L/T^2)$. A tridiagonal construction for such worst case is provided in \cite{nesterov2004introductory} (see also \cite{nesterov2018lectures}), sometimes also known as ``the worst function of the world''. However, note that in such $\cO(L/T^2)$ results the norms used for objective function smoothness description (i.e. $\|\nabla f(\vx) - \nabla f(\vy)\|_2\le L\|\vx - \vy\|_2$) and constraint unit ball description are the same $L_2$-norm. In our problem of interest \eqref{eq:problem}, the norms for objective function smoothness and unit ball description are $L_2$ and $L_1$ norms respectively.
For general convex smooth problems with mismatched norms in smoothness and unit ball definition, the lower complexity bound has been studied in \cite{guzman2015lower}. In that paper, worst-case problems are developed so that the rate of convergence of any first-order method could not be better than $\cO(L/T^3)$. It should be noted that for the unit $L_2$-norm ball case, the convergence rate performance limit $\cO(L/T^2)$ is matched by the projected accelerated gradient method \cite{nesterov1983method} (see also \cite{nesterov2018lectures}). The same method could also be applied to the unit $L_1$-norm ball case with the same $\cO(L/T^2)$ convergence rate. However, to the best of the author's knowledge, the gap between the existing achievable $\cO(L/T^2)$ rate and the performance limit $\cO(L/T^3)$ has not yet been closed in the literature. Closing this gap for general convex smooth optimization has been explicitly proposed as an open problem in \cite{Guzman2015}. 

In this paper, we address the $\cO(L/T^2)$ versus $\cO(L/T^3)$ performance gap when our attention is restricted to convex quadratic programming. Specifically, we propose an algorithm that requires $T$ oracle calls to compute an approximate solution with accuracy $\cO(L/T^3)$. Our intuition is based on the good-bad decomposition in harmonic analysis. Specifically, we will decompose each iterate into a ``good'' part with controlled Euclidean norm and a ``bad'' part with controlled number of entries. By acquiring columns of $Q$ in the bad coordinates and annihilating them in the complexity analysis, the final complexity bound is evaluated on the good part with controlled Euclidean norm, which allows us to achieve an improved $\cO(L/T^3)$ convergence rate. Our main idea is described in more detail in Section \ref{sec:motivation}. The proposed algorithm and its analysis are described in Section \ref{sec:main-algorithm}. Remaining issues of the proposed algorithm and its possible resolutions, as well as further extensions of the problem, will be discussed in the concluding remarks in Section \ref{sec:conclusion}. AI use is summarized in the Acknowledgement section.

Throughout this paper, we will use the following notation conventions. We write $[n]=\{1,\ldots,n\}$ for any positive integer $n$. For any $j\in[n]$, we denote by $\ve^j$ the $j$-th
standard basis vector. For any vector $\vx\in\R^n$, we use
$x^{(j)}$ to denote the $j$-th component of $\vx$.  For
$\cG\subseteq[n]$, its complement is $\cG^c$, its cardinality is
$|\cG|$, and $\vx^\cG$ denotes the vector equal to $\vx$ on
$\cG$ and zero elsewhere.  Thus
$\vx=\vx^\cG+\vx^{\cG^c}$. For any optimization problem, we use $\Argmin$ to denote the set of all optimal solutions.

\section{Main ideas}
\label{sec:motivation}

In this section, we describe the main ideas of this paper. Specifically, we show an important property of the $L_1$ norm that allows us to decompose any components of an iterate to good and bad parts that are both controllable by a level constant. The proposed algorithm can then be designed based on the control of bad coordinates and analysis of good ones. 

\subsection{A special property of the \texorpdfstring{$L_1$}{L1}-norm}
Our algorithm is based on the following observation on a special property of the $L_1$ norm. Fix any positive real \emph{level} $\tau>0$ and consider for any $\vx\in\R^n$ the index set $\cI_\vx:=\Set{j\in [n]: |x^{(j)}|> \tau}$ and its complement $\cI^c_\vx:=\Set{j\in [n]: |x^{(j)}|\le \tau}$. 
Our observation is that 
\begin{align}
	\label{eq:l1_unique_prop}
	\begin{aligned}
		\|\vx^{\cI_\vx^c}\|_2^2 =& \sum_{j\in\cI_\vx^c} \left(x^{(j)}\right)^2 \le \tau\sum_{j\in\cI_\vx^c} |x^{(j)}|\le \tau\|\vx\|_1\text{ and }
		\\
		|\cI_\vx| = &\frac{1}{\tau}\sum_{j\in \cI_\vx} \tau \le \frac{1}{\tau}\sum_{j\in \cI_\vx} |x^{(j)}| \le \frac{1}{\tau}\|\vx\|_1. 
	\end{aligned}
\end{align}
Consequently, for any $\vx$ in the unit $L_1$-norm ball we can use the level $\tau$ to control the size of the ``good'' part $\vx^{\cI_\vx^c}$ of $\vx$ with bounded norm $\|\vx^{\cI_\vx^c}\|_2^2 \le \tau$. Moreover, the level $\tau$ also controls the total number of the ``bad'' components $|\cI_\vx|\le 1/\tau$. 
It is important to note that the choice of $\tau$ and the inequalities in \eqref{eq:l1_unique_prop} are all independent of the problem dimension $n$. A similar property can be observed for any $L_p$ norm with $p\in [1,2]$: for any $\vx$ in the unit $L_p$-norm ball, $p\in [1,2]$, we have $\|\vx^{\cI_\vx^c}\|_2^2\le \tau^{2-p}$ and $|\cI_\vx|\le \tau^{-p}$.  
Note that such properties are well known in harmonic analysis, in which many problems are addressed using this principle by controlling the magnitude of the good part and the distribution of the bad part. See, e.g., the classical Calder\'on--Zygmund lemma \cite{calderon1952existence}. 
However, note that for the $L_2$-norm we lose the control on bounding the ``good'' part using the level $\tau$, since we only have $\|\vx^{\cI_\vx^c}\|_2^2\le 1$.

The special property has great potential in algorithm design for the convex quadratic program of interest over the $L_1$-norm ball. We may demonstrate it through the following minimalist toy example. Suppose that we know in advance that in our problem of interest \eqref{eq:problem} the matrix $Q$ is block-diagonal with blocks $B\in \bbS^T_+$ and $G\in \bbS^{n-T}_+$, where $T\ll n$, and that an optimal solution $\vx_*$ satisfies $\cI_{\vx_*} \subset  [T]$ when we set the level $\tau:=1/T$. Assume that $\cJ := \cI_{\vx_*}^c$ is known. Since $\|\vx_*^{\cJ}\|_2^2\le 1/T$, we can design our algorithm as follows: first make $T$ oracle inquiries at points $\ve^j$, $j\in [T]$ to obtain the entire block $B$, then apply $T$ iterations of a proximal gradient method (starting from initial guess $\vx_0 = 0$)
\begin{align}
	\label{eq:toy}
	\begin{aligned}
		& \vx_k \in \Argmin_{\vx\in X,\ \|\vx^{\cJ}\|_2^2\le 1/T} \langle \nabla f_G(\vx_{k-1}), \vx\rangle + f_B(\vx) + \frac{L}{2}\|(\vx - \vx_{k-1})^{\cJ}\|_2^2
		\\
		&\text{ where } f_B(\vx) :=  \frac{1}{2}\vx^\top \begin{pmatrix}
			B & 0 \\ 0 & 0
			\end{pmatrix}\vx - \vq^\top \vx
		\text{ and }f_G(\vx):=f(\vx) - f_B(\vx).
	\end{aligned}
\end{align}
It is straightforward to prove that $f(\vx_T)-f_*\le \cO(L\|(\vx_0 - \vx_*)^\cJ\|_2^2/T) = \cO(L/T^2)$ after $2T$ oracle inquiries, which is already an improvement in convergence rate over the $O(1/T)$ rate of vanilla proximal gradient methods. 

Of course, the above toy example relies on extremely strong assumptions on the problem structure. It also stops at an improved $\cO(1/T^2)$ convergence rate, instead of $\cO(1/T^3)$.
In the remainder of this section, we will demonstrate how the problem structure can be explored adaptively throughout algorithm design, and how Nesterov's acceleration method (see, e.g., \cite{nesterov2018lectures}) could be incorporated to improve the convergence rate further.

\subsection{Key algorithm design ideas}

We start by observing that the block-diagonal assumption on $B$ and $G$ in the preceding toy example is unnecessary. Specifically, as long as we know the indices $\cI$ that we would like to inquire (e.g., $[T]$ in the toy example above), then we can easily build an approximation $H$ of the original Hessian $Q$ by starting from $H=0$, looping over $\cI$, and applying the symmetric rank-1 (SR1) quasi-Newton updates sequentially (see, e.g., \cite{nocedal2006numerical}). The resulting matrix $H$ would then satisfy $0\preceq H\preceq Q$ and $(Q-H)\ve^j=0$ for all $j\in\cI$. Indeed, in the SR1 method the inquiries are not only restricted to $\ve^j$. We will adopt the general SR1 implementation and focus at the $k$-th iteration of an algorithm that we are trying to design. Here we assume that we have made SR1 updates in a set of vectors and $V_k$ is a linear span of them all. Part of the vectors are standard basis ones $\{\ve^j\}_{j\in \cI_{k-1}}\subseteq V_k$ which reveal certain column information of $Q$.
Our resulting approximation is now $H_k$ that satisfies $0\preceq H_k\preceq Q$ and $(Q-H_k)\vx=0$ for all $\vx\in V_k$. We denote 
\begin{align}
	\label{eq:fHk}
	f_{H_k}(\vx):= \frac{1}{2}\vx^\top H_k\vx - \vq^\top \vx.
\end{align}
This function plays a similar role to the function $f_B(\vx)$ in the preceding toy example. 
We will describe how $\cI_k$ and $V_k$ are determined next.

In our preceding  example we assumed that $\cI_{\vx_*}$ is already known. Recall that the complement of this index set characterizes the ``good'' portion of $\vx_*$ with small $L_2$-norm, and in the example we made sure that the corresponding portions of all proximal gradient iterations also  have small $L_2$-norm. In our SR1 update description above, the corresponding index set is $\cI_{k-1}$. Observe that the function $\vx\mapsto (1/2)\vx^\top (Q-H_k) \vx$ plays a similar role to the function $f_G(\vx)$ in the previous example.
Moreover, since $(Q-H_k)\ve^j=0$ for all $j\in\cI_{k-1}$, we have for any $\vx\in X$ that
\begin{align}
	\label{eq:QHk_hessian}
	\vx^\top(Q-H_k)\vx
	=
	\left(\vx^{\cI_{k-1}^c}\right)^\top
	(Q-H_k)\vx^{\cI_{k-1}^c}
	\le
	L\|\vx^{\cI_{k-1}^c}\|_2^2.
\end{align}
Consequently, if for some level $\tau>0$ we have $
|x^{(j)}|\le\tau$ for all $j\in\cI_{k-1}^c,
$
or equivalently $\cI_\vx\subseteq\cI_{k-1}$, then \eqref{eq:l1_unique_prop} yields
$
\|\vx^{\cI_{k-1}^c}\|_2^2\le\tau
$
and hence
\[
f(\vx)-f_{H_k}(\vx)
=
\frac{1}{2}\vx^\top(Q-H_k)\vx
\le
\frac{L\tau}{2}.
\]
Thus, once $\cI_{k-1}$ contains all coordinates of $\vx$ above the level $\tau$, the learned quadratic $f_{H_k}$ approximates $f$ at $\vx$ with error at most $L\tau/2$. The question is how to construct $\cI_k$. To this end, we fix the level $\tau>0$ and associate $\cI_k$ with a sequence of iterates $\vx_k$. Specifically, given $\cI_{k-1}$, we compute $\vx_k$ and add its newly identified coordinates to $\cI_k$. Equivalently, we decide which columns of $Q$ to inquire about based on the large components of $\vx_k$. This update guarantees that $\cI_{\vx_k}\subseteq \cI_k$. By the special property of the $L_1$-norm, outside $\cI_k$ we have $\|\vx_k^{\cI_k^c}\|_2^2 \le \|\vx_k^{\cI_{\vx_k}^c}\|_2^2\le \tau$. For $k\ge1$, we use $V_k$ to denote the span of $\vx_0,\ldots,\vx_{k-1}$ and the standard basis vectors $\ve^j$ with $j\in\cI_{k-1}$, which are precisely the inquiries that build up $H_k$.

After initialization, we update $\vx_k$ in the following way:
$$
\vx_k\in\Argmin_{\vx\in X}
f_{H_k}(\vx)+L\tau\Gamma_k p_{\cI_{k-1}}(\vx),
$$
where the weight $L\tau\Gamma_k$ balances the minimization of the approximation $f_{H_k}$ to the original function $f$ and regularization $p_{\cI_{k-1}}$. Here $f_{H_k}$ is defined in \eqref{eq:fHk}, and for any $\cI\subseteq [n]$ and any $\vx\in X$ we define $p_{\cI}(\vx)$ by two summands of Huber and absolute functions:
\begin{align}
	\label{eq:p}
	p_{\cI}(\vx)
	:=
	\sum_{j\in\cI^c}\psi_\tau(x^{(j)})
	+\sum_{j\in\cI}|x^{(j)}|,\text{ where }
	\psi_\tau(s):=
	\begin{cases}
		s^2/(2\tau),& |s|\le\tau,\\
		|s|-\tau/2,& |s|>\tau.
	\end{cases}
\end{align}
Note that we do not have the $\langle\nabla f_G(\vx_{k-1}),\vx\rangle$ terms that appeared in the preceding example; this is since $\vx\mapsto (1/2)\vx^\top (Q-H_k) \vx$ plays the role of $f_G$ but its gradient $ (Q-H_k) \vx=0$ for any $\vx\in V_k$. However, the Hessian term of this function is bounded by $\|\vx^{\cI_{k-1}^c}\|_2^2$ in \eqref{eq:QHk_hessian}, corresponding to the summand over $j\in\cI_{k-1}^c$ in the definition of $p_{\cI_{k-1}}$. For the components $x^{(j)}$ that are possibly large (either $j\in\cI_{k-1}$ or $|s|> \tau$ in the definition), we replace the larger quadratic penalty by a slower growing absolute value penalty so that we do not over-penalize the ``bad'' portions of $\vx_k$. In fact, since the Huber function $\psi_\tau(x^{(j)})\le |x^{(j)}|$, we have
\begin{align}
	\label{eq:p_bound}
0\le p_{\cI_{k-1}}(\vx)\le \|\vx\|_1\le1
\qquad\text{for all }\vx\in X,
\end{align}
so the regularization penalty will not be extremely large.

\section{The proposed algorithm}
\label{sec:main-algorithm}

We now state the algorithm suggested by the discussion in the previous section. Our proposed algorithm is described in Algorithm \ref{alg:main}.

\begin{algorithm}[!htbp]
\caption{\label{alg:main} The proposed algorithm}
\begin{algorithmic}[1]
\Require Maximum number of oracle calls $T_{\max}\ge0$,
level $0<\tau\le1$, $L>0$, and $\vq$ in problem \eqref{eq:problem}.
\State Set $T\gets0$, $H_0\gets0$,
$\Gamma_0\gets1/\tau$, and
$\cI_{-1}\gets\varnothing$. Choose
\(
 \vx_0\in\Argmin_{\vx\in X}
 \left\{L\tau\Gamma_0 p_{\emptyset}(\vx)-\vq^\top\vx\right\}\), and set
$\xu_0\gets\vx_0$, 
$
 \cI_0\gets\{j:|x_0^{(j)}|>\tau\},
$
and $\cU_0 \gets \cI_0$. 
\For{$k=1,2,\ldots$}
  \State Set $\widetilde H_k\gets H_{k-1}$
  \For{$j\in\cU_{k-1}$}
    \State $(\widetilde H_k,T)
    \gets\Call{SR1}{\widetilde H_k,\ve^j,T}$
  \EndFor
  \State $(H_k,T)
  \gets\Call{SR1}{\widetilde H_k,\vx_{k-1},T}$
  \State Set
  \(
  \displaystyle
  \bar\gamma_k\gets
  {2}/\left({1+\sqrt{1+4/\Gamma_{k-1}}}\right).
  \)
  \State For $0\le\gamma\le\bar\gamma_k$, define
  \begin{align}
  	\label{eq:subproblem}
  X_k^*(\gamma)
  &:={}\Argmin_{\vx\in X}
  \left\{L\tau\Gamma_{k-1}(1-\gamma)
  p_{\cI_{k-1}}(\vx)+f_{H_k}(\vx)\right\},
  \\
      Y_k(\gamma)
&:=\left\{\vx\in X_k^*(\gamma):
|x^{(j)}|\le\tau\text{ for all }j\in\cI_{k-1}^c\right\}.
\end{align}
  \State Choose $\vx_k\in X_k^*(0)$
  \If{$|x_k^{(j)}|\ge\tau$ for some $j\in\cI_{k-1}^c$}
    \State $\gamma_k\gets0$
  \ElsIf{$Y_k(\bar\gamma_k)\ne\varnothing$}
    \State $\gamma_k\gets\bar\gamma_k$ and replace $\vx_k$ by any
    $\vx_k\in Y_k(\bar\gamma_k)$
  \Else
    \State Set $\gamma_k$ to the largest $\gamma\in(0,\bar\gamma_k)$ such that $Y_k(\gamma)$ is non-empty, and replace $\vx_k$ by any $\vx_k\in Y_k(\gamma_k)$ satisfying $|x_k^{(j)}|=\tau$ for some $j\in\cI_{k-1}^c$
  \EndIf
  \State Set
  \(
  \displaystyle
  \Gamma_k\gets\Gamma_{k-1}(1-\gamma_k),
  \) \(
  \xu_k\gets(1-\gamma_k)\xu_{k-1}+\gamma_k\vx_k.\)
  \State Set $\cI_k = \cI_{k-1}\cup \cU_k$, where 
  \[
  \cU_k\gets\begin{cases}
  \{j\in\cI_{k-1}^c:\ |x_k^{(j)}|\ge\tau\} & \text{ if }\gamma_k<\bar\gamma_k
  	\\
  	\emptyset & \text {otherwise}.
  	\end{cases}
  \]
\EndFor
\Statex
\Function{$(H_+,T_+)=$SR1}{$H, \vx,T$}
\State Set $T_+\gets T+1$ and compute
\begin{align}
	\label{eq:sr1_update}
	H_+ = \begin{cases}
		H + \left({\vx^\top (Q - H)\vx}\right)^{-1}(Q-H)\vx\vx^\top(Q-H) & \text{ if }{\vx^\top (Q - H)\vx}>0
		\\
		H\text{ otherwise}.
	\end{cases}
\end{align}
\State If $T_+\ge T_{\max}$, terminate the algorithm.
\EndFunction
\end{algorithmic}
\end{algorithm}

A few remarks on Algorithm \ref{alg:main} are in place. 
First,
the index set $\cI_k$ records the bad coordinates selected for $Q$-column inquiry, while
$\cI_k^c$ records the provisional good coordinates. A coordinate in the
good part at one iteration may be added to $\cI_k$ later.
Second,
the coordinates outside $\cI_{k-1}$ with magnitude greater than $\tau$
are bad, while those with magnitude at most $\tau$ are good. Whenever
$\gamma_k<\bar\gamma_k$, all coordinates with magnitude at least $\tau$
are added to $\cI_k$. Among them, if we also have $\gamma_k>0$, then $\vx_k\in Y_k(\gamma_k)$,
so every coordinate added at that iteration has magnitude exactly $\tau$.
Third,
the choice of $\bar\gamma_k$ satisfies
\[
 \bar\gamma_k^2
 =\Gamma_{k-1}(1-\bar\gamma_k),\quad k\ge1.
\]
This is the well-known Nesterov-type parameter update equation (see, e.g., \cite{nesterov2018lectures}). From this
perspective, our good-bad decomposition can be seen as an analogous
non-Euclidean Nesterov-type update with Lipschitz constant $L$ and
strong convexity constant $1/\tau$ from the distance generating
function when restricted to good coordinates (here the $1/\tau$
coefficient comes from the Huber function definition).
Fourth, 
Algorithm~\ref{alg:main} is written with exact subproblem
solutions because our focus of this paper is the oracle
complexity. Their replacement by approximate solvers is discussed in our concluding remarks later in 
Section~\ref{sec:conclusion}.
Fifth, 
note that there are three possibilities in solving the subproblem \eqref{eq:subproblem}. Preferably, we would like $Y_k(\bar\gamma_k)$ to be nonempty, in which case our good-bad decomposition of iterates remains the same. Otherwise, we enlarge $\cI_{k-1}$ to form $\cI_k$. This happens with either $\gamma_k=0$ or $\gamma_k\in(0,\bar\gamma_k)$. We will show later in Lemma \ref{lem:subproblem-existence} that there always exist $\gamma_k$ and $\vx_k$ that satisfy at least one of the three possibilities.
Finally, 
it can be observed from the SR1 function description that each SR1 call requires exactly one oracle call $\vx\mapsto Q\vx$. When an index $j$ enters $\cI_k$, the algorithm makes the
inquiry of the column $Q\ve^j$ 
and incorporates it through the SR1 update. The properties of the SR1 updates are well-known in the literature (see, e.g., \cite{nocedal2006numerical}); we state the properties below without proof.

\begin{proposition}
	\label{thm:sr1_prop}
	Suppose that a sequence of SR1 calls of the form $(Q_i,T)=$SR1$(Q_{i-1}, \vx_{i-1},T)$ is made at an inquiry sequence $\{\vx_i\}_{i=0}^{t-1}$ with $Q_0=0$ and $\{Q_i\}_{i=1}^t$ is the sequence of outputs. Then we have
	\begin{align}
		0\preceq Q_{i-1}\preceq Q_i\preceq Q\text{ and }(Q-Q_i)\vx = 0,\ \forall \vx\in\operatorname{span}\{\vx_0,\ldots,\vx_{i-1}\},\ \forall i=1,\ldots, t
	\end{align}
\end{proposition}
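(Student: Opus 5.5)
The plan is induction on $i$, tracking the residual $R_i:=Q-Q_i$. The induction hypothesis is that $R_i\succeq 0$ and $R_i\vx_l=0$ for all $l<i$. The base case $i=0$ is immediate, since $R_0=Q\succeq0$ and the span is empty. Because $Q_i=Q-R_i$, the chain $0\preceq Q_{i-1}\preceq Q_i\preceq Q$ splits into three pieces: $R_i\succeq0$ gives $Q_i\preceq Q$; the update adds a rank-one term, which gives $Q_{i-1}\preceq Q_i$; and $Q_0=0$ together with monotonicity gives $Q_i\succeq0$.

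For the step, set $R=R_{i-1}\succeq0$, $\vx=\vx_{i-1}$ and $\vy=R\vx$, and split into the two cases of \eqref{eq:sr1_update}.

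\emph{Case $\vx^\top R\vx=0$.} Here $Q_i=Q_{i-1}$. Since $R\succeq0$, writing $R=R^{1/2}R^{1/2}$ gives $\|R^{1/2}\vx\|_2^2=0$, hence $R\vx=0$. So the new vector is already annihilated and the conclusion carries over unchanged.

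\emph{Case $\vx^\top R\vx>0$.} Here $R_i=R-\vy\vy^\top/(\vx^\top R\vx)$, and the added term $\vy\vy^\top/(\vx^\top R\vx)\succeq0$ gives monotonicity directly. There are two things to check.
\begin{itemize}
\item Annihilation of $\vx$: we have $R_i\vx=\vy-\vy(\vx^\top R\vx)/(\vx^\top R\vx)=0$.
\item Annihilation of earlier vectors: for $l<i-1$ the induction hypothesis gives $R\vx_l=0$, so $\vy^\top\vx_l=\vx^\top R\vx_l=0$ by symmetry of $R$. Therefore $R_i\vx_l=R\vx_l-\vy(\vy^\top\vx_l)/(\vx^\top R\vx)=0$.
\end{itemize}
Linearity then extends annihilation from the vectors $\vx_0,\ldots,\vx_{i-1}$ to their whole span.

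The main step is showing $R_i\succeq0$, and I would do it with Cauchy--Schwarz in the $R$-semi-inner product. For any $\vz$,
\begin{align}
\vz^\top R_i\vz=\vz^\top R\vz-\frac{(\vz^\top R\vx)^2}{\vx^\top R\vx}\ge0,
\end{align}
because $(\vz^\top R\vx)^2\le(\vz^\top R\vz)(\vx^\top R\vx)$ holds for any positive semidefinite $R$. This is the only point where positive semidefiniteness of $R=Q-Q_{i-1}$ is needed, so it must be carried in the induction hypothesis, as set up above.

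Finally, the algorithm interleaves SR1 calls on the vectors $\ve^j$ and on the iterates $\vx_{k-1}$. The statement covers any sequence of inquiry vectors, so it applies to this interleaved sequence without modification.
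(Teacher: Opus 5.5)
Your induction is correct and complete. The paper gives no proof of this proposition; it calls it a well-known SR1 property and defers to Nocedal--Wright, so your argument takes a different, self-contained route.

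The standard SR1 theory for quadratics is the hereditary (secant) property, which corresponds to your annihilation step. It is usually stated assuming no denominator vanishes, and it says nothing about semidefinite ordering; indeed SR1 does not preserve definiteness in general. Your argument supplies exactly the two pieces that hold only in this setting, where one starts from $Q_0=0$ and queries exact products $\vx\mapsto Q\vx$:
\begin{itemize}
\item Carrying $Q-Q_{i-1}\succeq0$ in the induction hypothesis lets Cauchy--Schwarz in the $R$-semi-inner product give $Q_i\preceq Q$.
\item The implication $\vx^\top R\vx=0\Rightarrow R\vx=0$ handles the skipped update. It also shows that the ``otherwise'' branch of \eqref{eq:sr1_update} can occur only when $\vx^\top R\vx=0$, which justifies your case split.
\end{itemize}

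An alternative is to identify the iterates with Nystr\"om approximations, $Q_i=QX_i(X_i^\top QX_i)^{+}X_i^\top Q$ with $X_i=[\vx_0,\ldots,\vx_{i-1}]$. The residual $Q-Q_i$ is then a generalized Schur complement, and all claims follow at once. Verifying that closed form, however, needs essentially the same induction, since each SR1 step is one step of Gram--Schmidt in the $Q$-inner product. Your direct route is therefore the more economical one.
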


We are now ready to prove the convergence properties of the proposed algorithm. We start by showing that Algorithm \ref{alg:main} is well-defined. Specifically, in the following lemma we will show that an iterate $\vx_k$ and coefficient $\gamma_k$ that satisfy the description of the algorithm always exist. 
\begin{lemma}
	\label{lem:subproblem-existence}
	At every iteration $k\ge1$, the choices of $\gamma_k$ and $\vx_k$
	in Algorithm~\ref{alg:main} can always be made. In particular, if the point
	chosen from $X_k^*(0)$ satisfies
	\[
	 |x_k^{(j)}|<\tau,\qquad j\in\cI_{k-1}^c,
	\]
	and $Y_k(\bar\gamma_k)=\varnothing$, then
	\[
	 \gamma_k:=\max\{\gamma\in(0,\bar\gamma_k):
	 Y_k(\gamma)\ne\varnothing\}
	\]
	is well-defined, and $Y_k(\gamma_k)$ contains a point $\vx_k$
	satisfying $|x_k^{(j)}|=\tau$ for some $j\in\cI_{k-1}^c$.
\end{lemma}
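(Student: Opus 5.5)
The plan is to treat the subproblem \eqref{eq:subproblem} as a parametric convex program in $\gamma$ and use upper semicontinuity of the solution map together with a connectedness argument.

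\textbf{Well-posedness.} For each $\gamma\in[0,\bar\gamma_k]$ the objective
\[
\Phi_\gamma(\vx):=L\tau\Gamma_{k-1}(1-\gamma)p_{\cI_{k-1}}(\vx)+f_{H_k}(\vx)
\]
is continuous and convex, since $H_k\succeq0$ by Proposition~\ref{thm:sr1_prop} and $\psi_\tau$ and $|\cdot|$ are convex. It is minimized over the compact set $X$, so $X_k^*(\gamma)$ is nonempty, compact and convex. Hence $\vx_k\in X_k^*(0)$ can always be chosen. If the first or second branch applies, nothing more is needed. So it remains to treat the case where the point chosen from $X_k^*(0)$ has $|x_k^{(j)}|<\tau$ for all $j\in\cI_{k-1}^c$ and $Y_k(\bar\gamma_k)=\varnothing$.

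\textbf{Closedness and the supremum.} Define
\[
S:=\{\gamma\in[0,\bar\gamma_k]:Y_k(\gamma)\ne\varnothing\}.
\]
Then $0\in S$, since $\vx_k\in Y_k(0)$. I will show $S$ is closed. Take $\gamma_i\to\gamma$ in $S$ with $\vy_i\in Y_k(\gamma_i)$. By compactness we may pass to a subsequence $\vy_i\to\vy$. Because $(\gamma,\vx)\mapsto\Phi_\gamma(\vx)$ is jointly continuous on $[0,\bar\gamma_k]\times X$, passing to the limit in $\Phi_{\gamma_i}(\vy_i)\le\Phi_{\gamma_i}(\vz)$ for all $\vz\in X$ gives $\vy\in X_k^*(\gamma)$; this is Berge's maximum theorem. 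The box constraints $|y^{(j)}|\le\tau$ are closed, so $\vy\in Y_k(\gamma)$. Thus $\gamma^*:=\sup S$ lies in $S$, and $\gamma^*<\bar\gamma_k$ because $Y_k(\bar\gamma_k)=\varnothing$. For the maximum to be taken over $(0,\bar\gamma_k)$ I also need $\gamma^*>0$, which is the next step.

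\textbf{Positivity and the boundary point (main obstacle).} I must show two things: that $\gamma^*>0$, and that $Y_k(\gamma^*)$ contains a point touching the level $\tau$ on $\cI_{k-1}^c$.

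For positivity, take $\gamma_i\downarrow0$ and suppose $Y_k(\gamma_i)=\varnothing$, choosing any $\vy_i\in X_k^*(\gamma_i)$. A limit point lies in $X_k^*(0)$, but it may differ from the chosen $\vx_k$, so strict inequality at $\vx_k$ does not transfer directly. To handle this I will use a \emph{distance} form of upper semicontinuity rather than sequences: $\operatorname{dist}(X_k^*(\gamma),X_k^*(0))\to0$. Combined with convexity of $X_k^*(\gamma)$, this suffices if I know $X_k^*(0)\subset\{|x^{(j)}|<\tau\}$. I expect this may require an extra argument, for instance a uniqueness or structure statement for the $\cI_{k-1}^c$ coordinates of minimizers. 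If that fails, I will fall back on the algorithm's convention that the point chosen from $X_k^*(0)$ determines the case, and argue via a continuous selection (such as the minimal-norm element after strong convexification).

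For the boundary point, I will argue by contradiction. Suppose every $\vy\in Y_k(\gamma^*)$ satisfies $|y^{(j)}|<\tau$ for all $j\in\cI_{k-1}^c$. Then at $\gamma>\gamma^*$ close to $\gamma^*$, minimizers near $Y_k(\gamma^*)$ still satisfy the box constraint strictly, which contradicts maximality. The subtle part is the same as in positivity: $X_k^*(\gamma^*)$ may contain points violating the bound, and minimizers at $\gamma>\gamma^*$ might approach those points instead.

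My remedy is convexity. Suppose $X_k^*(\gamma^*)$ contains both a point strictly inside the box and a point outside it. Then the segment between them lies in $X_k^*(\gamma^*)$, and by the intermediate value theorem it contains a point with $\max_{j\in\cI_{k-1}^c}|x^{(j)}|=\tau$. That point is the required $\vx_k$. If instead $X_k^*(\gamma^*)$ lies entirely inside the open box, then upper semicontinuity in the distance form gives $Y_k(\gamma)\ne\varnothing$ for $\gamma$ slightly larger than $\gamma^*$, contradicting maximality. This same connectedness and upper-semicontinuity argument also yields $\gamma^*>0$.
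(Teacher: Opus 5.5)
There is a genuine gap in the positivity step, $\gamma^*>0$. Without it the maximum over $(0,\bar\gamma_k)$ is not well-defined.

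Your other steps are fine. The closedness of $S$, the attainment $\gamma^*=\max S<\bar\gamma_k$, and the boundary-point step all work. At $\gamma^*$, your use of convexity of $X_k^*(\gamma^*)$ plus the intermediate value theorem on $\max_{j\in\cI_{k-1}^c}|x^{(j)}|$ is a valid alternative to the paper's argument, because a minimizer at $\gamma^*$ touching the level $\tau$ is exactly what is needed.

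Your closing claim that ``the same connectedness and upper-semicontinuity argument also yields $\gamma^*>0$'' is false. Suppose $X_k^*(0)$ contained the chosen $\vx_k$, strictly inside the box, together with some point outside it. Then upper semicontinuity gives nothing, since $X_k^*(0)$ is not inside the open box. The intermediate value theorem only produces a minimizer at $\gamma=0$ touching $\tau$, which says nothing about any $\gamma>0$. For a general convex parametric family this mixed case really can give $S=\{0\}$: take a solution set at $0$ that is a segment crossing the boundary, with the minimizer for every $\gamma>0$ being the outer endpoint. So problem-specific structure is required. Your fallback to a continuous selection does not supply it, and it does not interact with the arbitrary point the algorithm picks from $X_k^*(0)$.

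The missing ingredient is the one you guessed at: strict convexity of the Huber terms on their quadratic region rules out the mixed case at $\gamma=0$.
\begin{itemize}
\item Suppose $\vx_k,\vy\in X_k^*(0)$, with $|x_k^{(j)}|<\tau$ for all $j\in\cI_{k-1}^c$, and $y^{(j_0)}\ne x_k^{(j_0)}$ for some $j_0\in\cI_{k-1}^c$.
\item For small $t>0$, every point $\vx_k+t(\vy-\vx_k)$ keeps all $\cI_{k-1}^c$ coordinates in $(-\tau,\tau)$.
\item On that initial piece of the segment, the objective contains $L\tau\Gamma_{k-1}\,\psi_\tau\bigl(x_k^{(j_0)}+t(y^{(j_0)}-x_k^{(j_0)})\bigr)=\tfrac{L\Gamma_{k-1}}{2}\bigl(x_k^{(j_0)}+t(y^{(j_0)}-x_k^{(j_0)})\bigr)^2$. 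Since $\Gamma_{k-1}>0$, the objective is strictly convex in $t$ there.
\item But the objective is constant along the segment, because the segment lies in the convex set $X_k^*(0)$. This is a contradiction.
\end{itemize}
Hence every point of $X_k^*(0)$ shares the $\cI_{k-1}^c$ coordinates of $\vx_k$, so $X_k^*(0)$ lies in the open box. Your upper-semicontinuity argument then gives $[0,\epsilon]\subseteq S$ for some $\epsilon>0$. This is exactly how the paper proceeds; it reuses the same strict-convexity argument at $\gamma_k$, where your convexity-plus-IVT argument is an equally good substitute.
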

\begin{proof}
	Fix $k\ge1$. The objective defining $X_k^*(\gamma)$ is continuous
	in $(\gamma,\vx)$, and $X$ is compact. Hence $X_k^*(\gamma)$ is
	nonempty and compact for every $\gamma\in[0,\bar\gamma_k]$, and
	\[
	 \{(\gamma,\vx):0\le\gamma\le\bar\gamma_k,
	 \ \vx\in X_k^*(\gamma)\}
	\]
	is compact. It follows that
	\[
	 D:=\{\gamma\in[0,\bar\gamma_k]:Y_k(\gamma)\ne\varnothing\}
	\]
	is compact.

	The point selected from $X_k^*(0)$ exists. If it satisfies
	$|x_k^{(j)}|\ge\tau$ for some $j\in\cI_{k-1}^c$, the algorithm sets
	$\gamma_k=0$. Otherwise, $|x_k^{(j)}|<\tau$ for every
	$j\in\cI_{k-1}^c$. Every point in $X_k^*(0)$ then has the same
	coordinates indexed by $\cI_{k-1}^c$. Indeed, if two minimizers
	differed in one of these coordinates, a sufficiently short part of
	the segment joining them and starting from $\vx_k$ would remain in
	the quadratic part of the Huber function. Since the coefficient of
	the Huber term is positive, the objective would be strictly convex
	along this part of the segment, whereas every point on the segment
	between two minimizers must also be a minimizer. This is a
	contradiction. Thus every point in $X_k^*(0)$ satisfies
	$|x^{(j)}|<\tau$ for all $j\in\cI_{k-1}^c$.

	We claim that $[0,\epsilon]\subseteq D$ for some $\epsilon>0$.
	Otherwise, there would be a sequence $\gamma_\ell\downarrow0$ with
	$\gamma_\ell\notin D$. Choose $\vx_\ell\in X_k^*(\gamma_\ell)$.
	After passing to a subsequence, the vectors $\vx_\ell$ converge and
	the same index $j\in\cI_{k-1}^c$ satisfies
	$|x_\ell^{(j)}|>\tau$ for every $\ell$. The limit belongs to
	$X_k^*(0)$ and satisfies $|x^{(j)}|\ge\tau$, contradicting the
	preceding conclusion. This proves the claim.

	If $Y_k(\bar\gamma_k)$ is nonempty, the algorithm sets
	$\gamma_k=\bar\gamma_k$ and chooses $\vx_k$ from that set. Suppose
	that it is empty. Compactness of $D$ and the above claim then allow us to find
	\[
	 \gamma_k:=\max D\in(0,\bar\gamma_k).
	\]
	Choose any $\vx_k\in Y_k(\gamma_k)$. If all its coordinates indexed
	by $\cI_{k-1}^c$ had magnitude strictly smaller than $\tau$, the same
	strict-convexity argument would show that every point in
	$X_k^*(\gamma_k)$ has those same coordinates. Choose
	$\gamma_\ell\downarrow\gamma_k$ with $\gamma_\ell>\gamma_k$ and
	$\vx_\ell\in X_k^*(\gamma_\ell)$. Since $\gamma_\ell\notin D$,
	each $\vx_\ell$ has a coordinate indexed by $\cI_{k-1}^c$ with
	magnitude greater than $\tau$. After passing to a subsequence, the
	vectors converge, and the same coordinate has magnitude at least
	$\tau$ at the limit. The limit belongs to $X_k^*(\gamma_k)$, a
	contradiction. Therefore $|x_k^{(j)}|=\tau$ for some
	$j\in\cI_{k-1}^c$, as required.
\end{proof}

With iterate $\vx_k$ well-defined, we can now present in the following proposition the recursion properties of the iterates in Algorithm \ref{alg:main}.

\begin{proposition}
	\label{thm:recur}
	For any iteration $k\ge 1$ at which Algorithm \ref{alg:main} does not terminate, we have
	\begin{align}
		\label{eq:recur_pfx}
		\begin{aligned}
		& L\tau\Gamma_k p_{\cI_k}(\vx_k) + f_{H_k}(\vx_k) 
		\\
		\ge & (1-\gamma_k)(L\tau\Gamma_{k-1} p_{\cI_{k-1}}(\vx_{k-1}) + f_{H_{k-1}}(\vx_{k-1})) 
		\\
		& + \gamma_k f_{H_k}(\vx_k) + \frac{L\gamma_k^2}{2}
		\|(\vx_k-\vx_{k-1})^{\cI_{k-1}^c}\|_2^2
		+\frac{L\tau^2\Gamma_k}{2}
		|\cU_{k}|
	\end{aligned}
	\end{align}
	and
	\begin{align}
		\label{eq:recur_fxu}
		 f(\xu_k)
		\le{}&(1-\gamma_k)f(\xu_{k-1})
		+\gamma_k f_{H_k}(\vx_k)+\frac{L\gamma_k^2}{2}
		\|(\vx_k-\vx_{k-1})^{\cI_{k-1}^c}\|_2^2.
	\end{align}
\end{proposition}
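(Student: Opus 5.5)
The plan is to prove \eqref{eq:recur_pfx} by a three-point (Nesterov-type) argument built on the optimality of $\vx_{k-1}$ for a suitably modified previous subproblem. The bound \eqref{eq:recur_fxu} then follows from splitting $f=f_{H_k}+\tfrac12\vx^\top(Q-H_k)\vx$ and using Proposition~\ref{thm:sr1_prop}.

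\textbf{Notation and a bookkeeping identity.} For $k\ge1$ put $\Phi_k(\vx):=L\tau\Gamma_k p_{\cI_{k-1}}(\vx)+f_{H_k}(\vx)$. Since $\Gamma_k=\Gamma_{k-1}(1-\gamma_k)$, the point $\vx_k$ minimizes $\Phi_k$ over $X$, and this also holds at $k=0$ with the conventions $\cI_{-1}=\varnothing$, $H_0=0$.

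First I would show
\[
p_{\cI_k}(\vx_k)=p_{\cI_{k-1}}(\vx_k)+\tfrac{\tau}{2}|\cU_k|.
\]
Every $j\in\cU_k$ satisfies $|x_k^{(j)}|\ge\tau$, by the remarks after Algorithm~\ref{alg:main}; in the case $\gamma_k>0$ one even has $|x_k^{(j)}|=\tau$. For such $j$ we have $|x_k^{(j)}|-\psi_\tau(x_k^{(j)})=\tau/2$. Hence
\[
\text{LHS of \eqref{eq:recur_pfx}}=\Phi_k(\vx_k)+\tfrac{L\tau^2\Gamma_k}{2}|\cU_k|,
\]
and it remains to bound $\Phi_k(\vx_k)$ from below.

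\textbf{Key step: $\vx_{k-1}$ is optimal for the enlarged penalty.} Define $g_{k-1}(\vx):=L\tau\Gamma_{k-1}p_{\cI_{k-1}}(\vx)+f_{H_{k-1}}(\vx)$; note that $g_{k-1}(\vx_{k-1})$ is exactly the bracketed "previous value" in \eqref{eq:recur_pfx}.

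\emph{Claim: $\vx_{k-1}$ also minimizes $g_{k-1}$ over $X$.} The functions $g_{k-1}$ and $\Phi_{k-1}$ differ only on coordinates $j\in\cU_{k-1}$, where $\psi_\tau$ is replaced by $|\cdot|$. At those coordinates $|x_{k-1}^{(j)}|\ge\tau$, so $\psi_\tau'(x_{k-1}^{(j)})=\operatorname{sign}(x_{k-1}^{(j)})\in\partial|\cdot|(x_{k-1}^{(j)})$. Thus the subgradient certifying optimality of $\vx_{k-1}$ for $\Phi_{k-1}$ (together with the normal cone of $X$) also certifies optimality for $g_{k-1}$.

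\emph{Quadratic growth.} The point $\vx_{k-1}$ satisfies $|x_{k-1}^{(j)}|\le\tau$ on $\cI_{k-1}^c$: if $\gamma_{k-1}<\bar\gamma_{k-1}$ all larger coordinates were put into $\cU_{k-1}$, and otherwise $\vx_{k-1}\in Y_{k-1}$ and $\cI_{k-1}=\cI_{k-2}$. Take any $\vy\in X$ with $|y^{(j)}|\le\tau$ on $\cI_{k-1}^c$. Along the segment $[\vx_{k-1},\vy]$, every Huber term with $j\in\cI_{k-1}^c$ is the quadratic $s^2/(2\tau)$. Restricting $g_{k-1}$ to this segment and using minimality of $\vx_{k-1}$ gives
\[
g_{k-1}(\vy)\ge g_{k-1}(\vx_{k-1})+\tfrac{L\Gamma_{k-1}}{2}\|(\vy-\vx_{k-1})^{\cI_{k-1}^c}\|_2^2 .
\]

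\textbf{Assembling \eqref{eq:recur_pfx}.} Write
\[
\Phi_k(\vx_k)=(1-\gamma_k)\bigl[L\tau\Gamma_{k-1}p_{\cI_{k-1}}(\vx_k)+f_{H_k}(\vx_k)\bigr]+\gamma_k f_{H_k}(\vx_k).
\]
By Proposition~\ref{thm:sr1_prop}, $H_k\succeq H_{k-1}$, so $f_{H_k}\ge f_{H_{k-1}}$ and the bracket is at least $g_{k-1}(\vx_k)$.

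If $\gamma_k=0$ we are done, because the quadratic term in \eqref{eq:recur_pfx} vanishes. It then suffices to use $g_{k-1}(\vx_k)\ge g_{k-1}(\vx_{k-1})$.

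If $\gamma_k>0$, then $\vx_k\in Y_k(\gamma_k)$ is an admissible $\vy$ in the growth inequality. It remains to check $(1-\gamma_k)\Gamma_{k-1}\ge\gamma_k^2$. This holds because $\gamma\mapsto\gamma^2/(1-\gamma)$ is increasing and $\gamma_k\le\bar\gamma_k$, which satisfies $\bar\gamma_k^2=\Gamma_{k-1}(1-\bar\gamma_k)$.

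\textbf{Proof of \eqref{eq:recur_fxu}.} Convexity of $f_{H_k}$ together with $H_k\preceq Q$ gives
\[
f_{H_k}(\xu_k)\le(1-\gamma_k)f(\xu_{k-1})+\gamma_k f_{H_k}(\vx_k).
\]
For the remainder $\tfrac12\xu_k^\top(Q-H_k)\xu_k$, write
\[
\xu_k=\bigl[(1-\gamma_k)\xu_{k-1}+\gamma_k\vx_{k-1}\bigr]+\gamma_k(\vx_k-\vx_{k-1}).
\]
The bracket lies in $\operatorname{span}\{\vx_0,\dots,\vx_{k-1}\}\subseteq V_k$. The vectors $\ve^j$, $j\in\cI_{k-1}=\cU_0\cup\dots\cup\cU_{k-1}$, were also all queried. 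Proposition~\ref{thm:sr1_prop}, applied to the whole SR1 call sequence, therefore shows that $(Q-H_k)$ annihilates both the bracket and the $\cI_{k-1}$-components of $\vx_k-\vx_{k-1}$. The remainder reduces to
\[
\tfrac{\gamma_k^2}{2}\bigl((\vx_k-\vx_{k-1})^{\cI_{k-1}^c}\bigr)^\top(Q-H_k)(\vx_k-\vx_{k-1})^{\cI_{k-1}^c}\le\tfrac{L\gamma_k^2}{2}\|(\vx_k-\vx_{k-1})^{\cI_{k-1}^c}\|_2^2,
\]
using $0\preceq Q-H_k\preceq LI$.

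The main obstacle is the key step. A naive use of minimality of $\Phi_{k-1}$ loses the $\tfrac{L\tau^2\Gamma_{k-1}}{2}|\cU_{k-1}|$ term carried in the previous value. The fix is the subgradient observation that $\vx_{k-1}$ minimizes the function $g_{k-1}$ with the enlarged absolute-value penalty, whose minimum value is exactly that previous value.
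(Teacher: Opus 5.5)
Your proposal is correct and follows the paper's proof essentially step for step. The shared steps are: the identity $p_{\cI_k}(\vx_k)=p_{\cI_{k-1}}(\vx_k)+\tfrac{\tau}{2}|\cU_k|$; the subdifferential observation that $\vx_{k-1}$ also minimizes the objective with the enlarged absolute-value penalty $p_{\cI_{k-1}}$; quadratic growth along segments on which the Huber terms stay quadratic; the case split on $\gamma_k$ using $\gamma_k^2\le\Gamma_{k-1}(1-\gamma_k)$; and the SR1 annihilation argument for \eqref{eq:recur_fxu}. The only cosmetic difference is that you bound $f_{H_k}(\xu_{k-1})\le f(\xu_{k-1})$ via $H_k\preceq Q$, whereas the paper gets equality from annihilation, and both are valid.
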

\begin{proof}
For every $k\ge0$, the initialization and the subproblem definition yield
\begin{align}
	\label{eq:x}
	\vx_k \in \Argmin_{\vx\in X}
	\left\{L\tau \Gamma_k p_{\cI_{k-1}}(\vx) + f_{H_k}(\vx)\right\},
	\text{ and } |x_k^{(j)}|\le \tau,\quad j\in \cI_k^c.
\end{align}
Moreover, if $k\ge1$ and $\gamma_k>0$, then $\vx_k\in Y_k(\gamma_k)$ and hence
$|x_k^{(j)}|\le\tau$ for all $j\in\cI_{k-1}^c$.
For every $k\ge0$ and $j\in\cU_k$, we also have
$|x_k^{(j)}|\ge\tau$. Therefore
\begin{align}
	\label{eq:p_const_change}
	p_{\cI_k}(\vx_k) - p_{\cI_{k-1}}(\vx_k)
	= \frac{\tau}{2}|\cU_k|.
\end{align}
Moreover, at these coordinates, the Huber function and the absolute-value
function have the same subdifferential. The optimality condition in \eqref{eq:x}
thus also gives
\begin{align}
	\label{eq:x_also}
	\vx_k \in \Argmin_{\vx\in X}
	\left\{L\tau \Gamma_k p_{\cI_{k}}(\vx) + f_{H_k}(\vx)\right\}.
\end{align}
For any $\vx\in X$ such that $|x^{(j)}|\le \tau$ for all
$j\in\cI_k^c$, the Huber terms indexed by $\cI_k^c$ are quadratic
along the segment between $\vx_k$ and $\vx$. Hence \eqref{eq:x_also}
and the optimality condition of $\vx_k$ imply
\begin{align}
	\label{eq:x_oc}
	L\tau \Gamma_k p_{\cI_{k}}(\vx) + f_{H_k}(\vx)
	-L\tau \Gamma_k p_{\cI_{k}}(\vx_k) - f_{H_k}(\vx_k)
	\ge \frac{L\Gamma_k}{2}
	\|(\vx - \vx_k)^{\cI_k^c}\|_2^2.
\end{align}

Suppose first that $\gamma_k>0$. Applying \eqref{eq:x_oc} at $k-1$
with $\vx=\vx_k$ gives
\begin{align}
	L\tau \Gamma_{k-1} p_{\cI_{k-1}}(\vx_k) + f_{H_{k-1}}(\vx_k) - L\tau \Gamma_{k-1} p_{\cI_{k-1}}(\vx_{k-1}) - f_{H_{k-1}}(\vx_{k-1}) \ge \frac{L\Gamma_{k-1}}{2}\|(\vx_k - \vx_{k-1})^{\cI_{k-1}^c}\|_2^2.
\end{align}
By \eqref{eq:p_const_change} and Proposition \ref{thm:sr1_prop},
\begin{align}
	L\tau \Gamma_{k-1} p_{\cI_{k}}(\vx_k) + f_{H_{k}}(\vx_k) \ge L\tau \Gamma_{k-1} p_{\cI_{k-1}}(\vx_k) + f_{H_{k-1}}(\vx_k) + \frac{L\tau^2\Gamma_{k-1}}{2}|\cU_k|.
\end{align}
Adding these inequalities, multiplying by $(1-\gamma_k)$, and adding
$\gamma_k f_{H_k}(\vx_k)$ gives the right-hand side of
\eqref{eq:recur_pfx} with $L\Gamma_k/2$ in place of
$L\gamma_k^2/2$. Since
\[
 \gamma_k^2\le\bar\gamma_k^2
 =\Gamma_{k-1}(1-\bar\gamma_k)
 \le\Gamma_{k-1}(1-\gamma_k)=\Gamma_k,
\]
this proves \eqref{eq:recur_pfx} when $\gamma_k>0$.

If $\gamma_k=0$, then $\Gamma_k=\Gamma_{k-1}$. Since $\vx_k$
minimizes the objective in \eqref{eq:x}, $H_k\succeq H_{k-1}$, and
$\vx_{k-1}$ satisfies \eqref{eq:x_also},
\begin{align*}
 &L\tau\Gamma_kp_{\cI_{k-1}}(\vx_k)+f_{H_k}(\vx_k)
 \\
 &\quad\ge
 L\tau\Gamma_{k-1}p_{\cI_{k-1}}(\vx_{k-1})
 +f_{H_{k-1}}(\vx_{k-1}).
\end{align*}
Combining this inequality with \eqref{eq:p_const_change} proves
\eqref{eq:recur_pfx} also when $\gamma_k=0$.

Before $\vx_k$ is selected, the SR1 calls have inquired
$\vx_0,\ldots,\vx_{k-1}$. Hence Proposition \ref{thm:sr1_prop}
shows that $Q-H_k$ annihilates $\vx_0,\ldots,\vx_{k-1}$.
Since $\xu_{k-1}$ is a convex combination of these vectors,
$Q-H_k$ also annihilates $\xu_{k-1}$. Consequently,
\begin{align}
	f(\xu_k)
	= & f_{H_k}(\xu_k)+
	\frac{\gamma_k^2}{2}(\vx_k- \vx_{k-1})^\top(Q-H_k)
	(\vx_k- \vx_{k-1})
	\\
	\le & (1-\gamma_k)f_{H_k}(\xu_{k-1})+ \gamma_k f_{H_k}(\vx_k) + 
	\frac{L\gamma_k^2}{2}\|(\vx_k- \vx_{k-1})^{\cI_{k-1}^c}\|_2^2
	\\
	= & (1-\gamma_k)f(\xu_{k-1}) + \gamma_k f_{H_k}(\vx_k) +
	\frac{L\gamma_k^2}{2}\|(\vx_k- \vx_{k-1})^{\cI_{k-1}^c}\|_2^2
\end{align}
where the inequality follows from convexity of $f_{H_k}$ and
\eqref{eq:QHk_hessian}. This proves \eqref{eq:recur_fxu}.
\end{proof}

With the help of the above recursions, we are ready to describe the convergence property of Algorithm \ref{alg:main} below. 

\begin{theorem}\label{thm:main}
Suppose that 
the SR1 function call terminates Algorithm \ref{alg:main} at $k=K+1$. The approximate solution $\xu_K$ produced by the algorithm satisfies
\begin{equation}
	f(\xu_K)-f_*
\le L\tau\Gamma_K\left(1-\frac{\tau}{2}|\cI_K|\right),\text{ where }|\cI_K|\le \frac{2}{\tau}.
	\end{equation}
\end{theorem}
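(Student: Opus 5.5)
The plan is a Lyapunov (potential-function) argument built from the two recursions in Proposition \ref{thm:recur}. Define
\[
\Phi_k:=L\tau\Gamma_k p_{\cI_k}(\vx_k)+f_{H_k}(\vx_k),\qquad E_k:=f(\xu_k)-\Phi_k .
\]
Subtracting \eqref{eq:recur_pfx} from \eqref{eq:recur_fxu} cancels both the $\gamma_k f_{H_k}(\vx_k)$ term and the quadratic term $\frac{L\gamma_k^2}{2}\|(\vx_k-\vx_{k-1})^{\cI_{k-1}^c}\|_2^2$. This leaves
\[
E_k\le(1-\gamma_k)E_{k-1}-\frac{L\tau^2\Gamma_k}{2}|\cU_k| .
\]
Since $\Gamma_k=\Gamma_{k-1}(1-\gamma_k)$, dividing by $\Gamma_k$ gives
\[
E_k/\Gamma_k\le E_{k-1}/\Gamma_{k-1}-\frac{L\tau^2}{2}|\cU_k| .
\]
Telescoping from $k=1$ to $K$ then yields
\[
E_K\le \Gamma_K\Big(E_0/\Gamma_0-\frac{L\tau^2}{2}\sum_{k=1}^K|\cU_k|\Big).
\]
Here every $k\le K$ is a non-terminating iteration, so Proposition \ref{thm:recur} applies at each of them.

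\textbf{Initial term.} I will show that $E_0\le -\frac{L\tau}{2}|\cU_0|$, which equals $-\Gamma_0\frac{L\tau^2}{2}|\cU_0|$ because $\Gamma_0=1/\tau$. We have $H_0=0$ and $\xu_0=\vx_0$. By \eqref{eq:p_const_change} at $k=0$ (with $\cI_{-1}=\varnothing$, $\cI_0=\cU_0$),
\[
E_0=\tfrac12\vx_0^\top Q\vx_0-L p_{\varnothing}(\vx_0)-\tfrac{L\tau}{2}|\cU_0| .
\]
It therefore suffices to show $\tfrac12\vx_0^\top Q\vx_0\le \tfrac L2\|\vx_0\|_2^2\le L p_\varnothing(\vx_0)$, which reduces to the scalar inequality $s^2/2\le\psi_\tau(s)$ for $|s|\le1$. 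For $|s|\le\tau$ this holds because $s^2/(2\tau)\ge s^2/2$ when $\tau\le1$. For $\tau<|s|\le1$ the function $|s|-\tau/2-s^2/2$ is concave in $|s|$ and nonnegative at both endpoints $\tau$ and $1$, hence nonnegative throughout. Since the sets $\cU_0,\cU_1,\ldots,\cU_K$ are disjoint and their union is $\cI_K$, this gives
\[
E_K\le -\frac{L\tau^2\Gamma_K}{2}|\cI_K| .
\]

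\textbf{Comparison with $f_*$.} By \eqref{eq:x_also}, $\vx_K$ minimizes $L\tau\Gamma_Kp_{\cI_K}+f_{H_K}$ over $X$. I will compare it with an optimal solution $\vx_*$. Using $H_K\preceq Q$ from Proposition \ref{thm:sr1_prop}, so that $f_{H_K}\le f$, together with $p_{\cI_K}\le1$ on $X$ from \eqref{eq:p_bound}, I get $\Phi_K\le L\tau\Gamma_K+f_*$. Hence
\[
f(\xu_K)-f_*=E_K+\Phi_K-f_*\le L\tau\Gamma_K\Big(1-\frac\tau2|\cI_K|\Big).
\]
The bound on $|\cI_K|$ then follows from nonnegativity: the left-hand side is at least $0$ because $\xu_K\in X$ (it is a convex combination of points of $X$), and $L\tau\Gamma_K>0$. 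This forces $1-\frac\tau2|\cI_K|\ge0$, that is, $|\cI_K|\le 2/\tau$.

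\textbf{Main obstacle.} The delicate step is the initial term $E_0$: there $Q$ does not annihilate $\vx_0$, so the deficit must be absorbed by the penalty itself. This requires the scalar inequality above, and it uses $\tau\le1$ and $\|\vx_0\|_\infty\le1$. I also need the bookkeeping of $|\cU_0|$ to line up exactly with the form of \eqref{eq:p_const_change} at $k=0$.
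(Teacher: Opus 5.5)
Your proposal is correct and follows the paper's route. Like the paper, you combine \eqref{eq:recur_pfx} and \eqref{eq:recur_fxu}, divide by $\Gamma_k$ and telescope, absorb the initial term using $Q\preceq LI$ and $\psi_\tau(s)\ge s^2/2$ for $|s|\le 1$, and then compare $\vx_K$ with $\vx_*$ through \eqref{eq:x_also}, \eqref{eq:p_bound} and $H_K\preceq Q$. You also verify the scalar Huber inequality and the identity $\sum_{k=0}^K|\cU_k|=|\cI_K|$ explicitly, which the paper only states in passing.
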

\begin{proof}
By \eqref{eq:recur_pfx} and \eqref{eq:recur_fxu} in Proposition \ref{thm:recur} we have
	\begin{align}
	\begin{aligned}
		& L\tau\Gamma_k p_{\cI_k}(\vx_k) + f_{H_k}(\vx_k) - f(\xu_k)
		\\
		\ge & (1-\gamma_k)(L\tau\Gamma_{k-1} p_{\cI_{k-1}}(\vx_{k-1}) + f_{H_{k-1}}(\vx_{k-1}) - f(\xu_{k-1}))  +\frac{L\tau^2\Gamma_k}{2}
		|\cU_{k}|.
	\end{aligned}
\end{align}
Noting that $\Gamma_k = \Gamma_{k-1}(1-\gamma_k)$, dividing the above relation by $\Gamma_k$  and summing up we have
\begin{align}
	& \frac{1}{\Gamma_k}\left(L\tau\Gamma_k p_{\cI_k}(\vx_k) + f_{H_k}(\vx_k) - f(\xu_k)\right) 
	\\
	\ge & \frac{1-\gamma_1}{\Gamma_1}(L\tau\Gamma_{0} p_{\cI_{0}}(\vx_{0}) + f_{H_{0}}(\vx_{0}) - f(\xu_{0})) + \frac{L\tau^2}{2}\sum_{i=1}^{k}|\cU_i|
	\\
	= & \tau\left(Lp_{\emptyset}(\vx_0)
	-\frac12\vx_0^\top Q\vx_0\right)
	+\frac{L\tau^2}{2}\sum_{i=0}^{k}|\cU_i|
	\\
	\ge & \frac{L\tau^2}{2}|\cI_k|.
\end{align}
Here the last inequality follows from $Q\preceq LI$ and the fact that
$\psi_\tau(s)\ge s^2/2$ for $|s|\le1$ and $0<\tau\le1$.

Let $\vx_*$ be an optimal solution of our problem of interest \eqref{eq:problem}. By \eqref{eq:x_also}, \eqref{eq:p_bound}, and Proposition \ref{thm:sr1_prop}, the above
inequality implies that
\begin{align}
 &\frac{L\tau^2\Gamma_k}{2}|\cI_k|+f(\xu_k)
 \le L\tau\Gamma_k p_{\cI_k}(\vx_k)+f_{H_k}(\vx_k)\le L\tau\Gamma_k p_{\cI_k}(\vx_*)+f_{H_k}(\vx_*)
 \\
 \le& L\tau\Gamma_k+f(\vx_*).
\end{align}
Letting $k=K$ in the above relation we have
\begin{align}
	f(\xu_K)-f_*
	\le L\tau\Gamma_K\left(1-\frac{\tau}{2}|\cI_K|\right).
\end{align}
Since $f(\xu_K)\ge f_*$, we also conclude that $|\cI_K|\le 2/\tau$. 
\end{proof}

It suffices to bound $\Gamma_k$ in the above relation to obtain the final convergence rate, as studied in the theorem below. 

\begin{theorem}
	Suppose that 
	the maximum number of oracle calls $T_{\max}\ge 5$, the level $\tau = 6/(T_{\max}+1)$, and the SR1 function call terminates Algorithm \ref{alg:main} at $k=K+1$. The approximate solution $\xu_K$ produced by the algorithm satisfies
	\begin{align}
		\label{eq:main_rate}
		f(\xu_K)-f_*\le\frac{27L}{(T_{\max}+1)^2(T_{\max}-2)}.
	\end{align}
\end{theorem}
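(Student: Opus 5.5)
The plan is to combine Theorem~\ref{thm:main} with two counting facts: an upper bound on $\Gamma_K$ in terms of the number of ``regular'' iterations, and a lower bound on the number of iterations in terms of the oracle budget. We then optimize over the unknown $s:=|\cI_K|\in[0,2/\tau]$. Throughout, call iteration $k\ge1$ \emph{regular} if $\gamma_k=\bar\gamma_k$ and \emph{irregular} otherwise.

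\textbf{Step 1 (bounding $\Gamma_K$).} At a regular iteration, $\bar\gamma_k^2=\Gamma_{k-1}(1-\bar\gamma_k)=\Gamma_k$. The standard Nesterov computation then gives
\[
\frac{1}{\sqrt{\Gamma_k}}-\frac{1}{\sqrt{\Gamma_{k-1}}}
=\frac{\Gamma_{k-1}-\Gamma_k}{\sqrt{\Gamma_k\Gamma_{k-1}}\,(\sqrt{\Gamma_{k-1}}+\sqrt{\Gamma_k})}
\ge \frac{\bar\gamma_k\Gamma_{k-1}}{2\Gamma_{k-1}\sqrt{\Gamma_k}}=\frac12 .
\]
At an irregular iteration $\Gamma_k=\Gamma_{k-1}(1-\gamma_k)\le\Gamma_{k-1}$, so $1/\sqrt{\Gamma_k}$ does not decrease. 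With $\Gamma_0=1/\tau$ and $m$ regular iterations among $1,\dots,K$, this yields
\[
\Gamma_K\le \frac{4}{(m+2\sqrt\tau)^2}\le\frac{4}{m^2}.
\]

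\textbf{Step 2 (counting).} If $\gamma_k<\bar\gamma_k$, then $\cU_k\neq\varnothing$. When $\gamma_k=0$ this holds by the branch condition; when $\gamma_k\in(0,\bar\gamma_k)$ it holds because Lemma~\ref{lem:subproblem-existence} supplies a coordinate with $|x_k^{(j)}|=\tau$. Since the sets $\cU_k$ are disjoint and contained in $\cI_K$, the number of irregular iterations is at most $s$.

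For the oracle budget, iteration $k$ makes $|\cU_{k-1}|+1$ calls. Termination occurs during iteration $K+1$, so
\[
T_{\max}\le \sum_{i=0}^{K}|\cU_i|+(K+1)\le s+K+1 .
\]
Hence $K\ge T_{\max}-1-s$ and $m\ge K-s\ge T_{\max}-1-2s$.

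\textbf{Step 3 (optimization).} Put $T=T_{\max}$ and $\tau=6/(T+1)$. Then $1-\tau s/2=(T+1-3s)/(T+1)$, and Theorem~\ref{thm:main} gives
\[
f(\xu_K)-f_*\le \frac{24L}{(T+1)^2}\cdot\frac{T+1-3s}{(T-1-2s)^2},
\qquad 0\le s\le \frac{T+1}{3}.
\]
On this range the denominator is positive, since $T-1-2s\ge (T-5)/3$, and this is where $T\ge5$ is needed. Differentiating $h(s)=(T+1-3s)/(T-1-2s)^2$ shows that the sign of $h'$ is that of $T+7-6s$, so the maximum is at $s=(T+7)/6$, giving $h\le 9/(8(T-5))$. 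This naive version yields $27L/((T+1)^2(T-5))$.

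\textbf{Main obstacle.} The crude version above matches the stated constant $27$ but has $T-5$ in place of $T-2$. The hard part is sharpening the counting so that the denominator becomes $T-2$. Two sources of slack are available. First, the $2\sqrt\tau$ term dropped in Step~1. Second, the off-by-one slack in Step~2: iteration $K+1$ may terminate before using all its calls, and $\cU_K$ is queried only in iteration $K+1$. I would first redo Step~2 charging only $\sum_{i<K}|\cU_i|$ to completed iterations, and then keep $m+2\sqrt\tau$ in place of $m$. I would then recheck whether the maximizer of the resulting rational function in $s$ gives $9/(8(T-2))$, adjusting the admissible range of $s$ via $|\cI_K|\le2/\tau$ if needed. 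Whatever the final constants turn out to be, the structure of the argument is the three steps above.
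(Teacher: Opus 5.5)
\textbf{There is a genuine gap.} Your Steps 2 and 3 are correct and match the paper's counting ($K-m\le|\cI_K|$, $T_{\max}\le K+|\cI_K|+1$). As written, however, the argument only yields $27L/((T_{\max}+1)^2(T_{\max}-5))$. That is weaker than the statement and degenerates at $T_{\max}=5$. Neither of the repairs you propose closes the gap.

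\emph{The counting in Step~2 has no slack in the worst case.} Take $\cI_0=\varnothing$, let each irregular iteration add exactly one index, and let termination occur at the last SR1 call of iteration $K+1$. Then both counting inequalities hold with equality, so charging fewer calls to iteration $K+1$ is not legitimate.

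\emph{Keeping the $2\sqrt\tau$ term does not help either.} It only turns $T_{\max}-5$ into $T_{\max}-5+6\sqrt\tau$ in your final bound. This reaches $T_{\max}-2$ only if $\tau\ge 1/4$, i.e.\ $T_{\max}\le 23$.

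\textbf{The missing idea is in Step~1.} Starting the telescoping at $1/\sqrt{\Gamma_0}=\sqrt\tau$ wastes the first regular iteration. At any regular iteration, $\Gamma_k=\bar\gamma_k^2<1$ no matter how large $\Gamma_{k-1}$ is, because $\bar\gamma_k=2/(1+\sqrt{1+4/\Gamma_{k-1}})<1$. So at the first regular iteration $k_1$ you already have $1/\sqrt{\Gamma_{k_1}}>1$, not merely $\ge\sqrt\tau+\tfrac12$. Each of the remaining $m-1$ regular iterations adds at least $\tfrac12$ to $1/\sqrt{\Gamma_k}$, and irregular iterations do not decrease it. Hence $\Gamma_K\le 4/(m+1)^2$. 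This is exactly what the paper obtains by comparing $\Gamma_{k_i}$ with the auxiliary sequence started at $\tilde\Gamma_1=1$.

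\textbf{Completing the bound.} With $m+1\ge T_{\max}-2s$, your Step~3 now involves $(T_{\max}+1-3s)/(T_{\max}-2s)^2$. Use the identity $3(T_{\max}-2s)=(T_{\max}-2)+2(T_{\max}+1-3s)$ together with $(a+b)^2\ge 4ab$. This gives $(T_{\max}-2s)^2\ge\tfrac89(T_{\max}-2)(T_{\max}+1-3s)$, hence the stated bound $27L/((T_{\max}+1)^2(T_{\max}-2))$.

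To start this telescoping you need $m\ge1$. Treat $s=(T_{\max}+1)/3$ separately: Theorem~\ref{thm:main} then gives $f(\xu_K)=f_*$. For $s<(T_{\max}+1)/3$, your own counting gives $m>(T_{\max}-5)/3\ge 0$.
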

\begin{proof}
Let $1\le k_1<\ldots<k_r\le K$ be the set of iteration indices such that $\gamma_{k_i} = \bar\gamma_{k_i}$ for all $i\in [r]$. At every other iteration, $\cU_k$ is nonempty, so at least one coordinate is added to $\cI_k$. Consequently,
\begin{align}
	\label{eq:Kr_bound}
	K - r\le |\cI_K|. 
\end{align}
Moreover, before termination at most $K+1$ vectors among
$\vx_0,\ldots,\vx_K$ and at most $|\cI_K|$ column vectors of $Q$ have
been queried. 
Hence the total number of oracle calls is bounded
\begin{align}
	\label{eq:TmaxK_bound}
 T_{\max}\le K+|\cI_K|+1.
\end{align}
The reason why this is an inequality is since the count includes the inquiries at termination $K+1$, although
$\vx_{K+1}$ and $\xu_{K+1}$ are not computed. From \eqref{eq:Kr_bound} and \eqref{eq:TmaxK_bound} we then have
\begin{align}
	\label{eq:rTmax_bound}
	\ r\ge T_{\max}-2|\cI_K|-1.
\end{align}
Theorem~\ref{thm:main} result on $|\cI_K|$ and the choice of $\tau$ also yield
\begin{align}
	\label{eq:Ik_card_bound}
 |\cI_K|\le\frac{T_{\max}+1}{3}.
\end{align}
Note that if equality holds, then Theorem~\ref{thm:main} implies that
$f(\xu_K)-f_*=0$ and \eqref{eq:main_rate} is satisfied trivially. Therefore, we will assume that the inequality above is strict. Now that our assumption that $T_{\max}\ge 5$, \eqref{eq:rTmax_bound}, and \eqref{eq:Ik_card_bound} implies the trivial bound $r\ge 1$. 

For any $i\in [r]$ we have 
\begin{align}
	\label{eq:Gamma_eq}
\Gamma_{k_i} =  \Gamma_{k_i-1}(1-\gamma_{k_i})=\gamma_{k_i}^2 =\frac{4}{\left({1+\sqrt{1+4/\Gamma_{{k_i}-1}}}\right)^2}.
\end{align}
Observe that the above relation implies that $\Gamma_{k_i}\le 1$ for all $i\in [r]$. For any $k\in[K]\backslash\{k_1,\ldots, k_r\}$ we have $\Gamma_{k} = \Gamma_{k-1}(1-\gamma_{k})\le \Gamma_{k-1}$. Thus we have $\Gamma_{k_i-1}\le\Gamma_{k_{i-1}}$ for $i=2,\ldots,r$, while \eqref{eq:Gamma_eq} gives $\Gamma_{k_1}<1$. Consequently, if we define an auxiliary sequence $\{\tilde\Gamma_i\}_{i\in [r]}$ with $\tilde\Gamma_1 = 1$ and $\tilde\Gamma_i =   {4}/\left({1+\sqrt{1+4/\tilde\Gamma_{i-1}}}\right)^2$ for $i=2,\ldots,r$, then by induction and using  
the relations \eqref{eq:Gamma_eq}, $\Gamma_{k_i}\le 1$, $\Gamma_{k_i-1}\le\Gamma_{k_{i-1}}$, and the monotonicity of the last expression in \eqref{eq:Gamma_eq}, we have $\Gamma_{k_i}\le\tilde\Gamma_i$. The property of sequence $\{\tilde\Gamma_i\}_{i\in [r]}$ is well studied; e.g., by 
\cite[Lemma~4.2(e)]{wu2026universal} with $E_i=1$ we have
$
 \tilde\Gamma_r\le{4}/{(r+1)^2}.
$
Thus $\Gamma_{k_r} \le 4/(r+1)^2$ and hence
$
	\Gamma_K\le {4}/{(r+1)^2}.
$
Applying the above estimate to the result of Theorem~\ref{thm:main}, and noting \eqref{eq:rTmax_bound} 
we have
\begin{align*}
 f(\xu_K)-f_*
 \le & L\frac{6}{T_{\max}+1}\cdot\frac{4}{(r+1)^2}\left(1 - \frac{3}{T_{\max}+1}|\cI_K|\right)
 \\
 = & 
 \frac{24L(T_{\max}+1-3|\cI_K|)}
 {(T_{\max}+1)^2(r+1)^2}\le  \frac{24L(T_{\max}+1-3|\cI_K|)}
 {(T_{\max}+1)^2(T_{\max}-2|\cI_K|)^2}
 \\
 \le & \frac{27L}{(T_{\max}+1)^2(T_{\max}-2)}.
\end{align*}
Here in the last inequality we use the fact from \eqref{eq:Ik_card_bound} that
\[
 (T_{\max}-2|\cI_K|)^2
 =\frac{\left((T_{\max}-2)
 +2(T_{\max}+1-3|\cI_K|)\right)^2}{9}
 \ge\frac{8(T_{\max}-2)
 (T_{\max}+1-3|\cI_K|)}{9}.
\]
\end{proof}

From the above theorem, we can observe that the proposed algorithm achieves an $\cO(1/T^3)$ rate for solving problem \eqref{eq:problem} after $T$ oracle calls.

\section{Concluding remarks}
\label{sec:conclusion}

In this paper, we propose a first-order method with $\cO(L/T^3)$ convergence rate for solving convex quadratic programming over the unit $L_1$-ball. Our method is based on the idea of decomposing any iterate into good and bad parts and acquiring Hessian matrix column information for all the bad coordinates. It is straightforward to extend our method to problems on $L_1$-norm balls of any size. In this section, we discuss a few possible extensions of the proposed algorithm. 

First, the subproblems in Algorithm~\ref{alg:main} involve only coefficients
known after the preceding inquiries. It is certainly possible to implement an approximate subproblem solver that stops with desirable subproblem accuracy. Observing that the difficulty of the subproblems is dependent mainly on the set of inquired bad coordinates, which is independent of the problem dimension, we believe that it is reasonable to hope that the complexity of the subproblem solver is dimension independent. For simplicity in exposition, the theorem stated in this paper only concerns exact computation on known subproblems and
does not supply complexity bounds for those subproblem solvers.

The fixed level $\tau$ is also mainly our choice for simple exposition.  In
the present algorithm it is selected using the prescribed budget
$T_{\max}$, so the method is not an anytime algorithm.  One may instead
decrease the level according to the current oracle count and update
the corresponding parameters in Algorithm~\ref{alg:main}.  We believe that the same good--bad
mechanism would then suggest a single sequence that can be stopped after
any number of inquiries.  Establishing such an anytime formulation
requires additional bookkeeping because an index identified at one
level must remain consistently incorporated when the level changes;
we do not include those details here.

Finally, the observation in Section~\ref{sec:motivation} is not specific to $L_1$ in
its qualitative form.  For the unit $L_p$ ball with
$1<p<2$, \eqref{eq:l1_unique_prop} is replaced by the bounds
\(
 \|\vx^{\cI_\vx^c}\|_2^2\le\tau^{2-p}$ and $
 |\cI_\vx|\le\tau^{-p}.
\)
If the same proposed algorithmic mechanism can be carried through, we believe that it is also possible to develop a rate that interpolates between our $\cO(1/T^3)$ at
$p=1$ and the usual $\cO(1/T^2)$ at $p=2$. 

\subsection{Acknowledgement}

The author is partially supported by AFOSR grant FA9550-25-1-0278. This work was supported in part by OpenAI API credits provided by Clemson University and administered by Clemson University Research Computing and Data (RCD). 
The OpenAI ChatGPT 5.6 (Sol) model was previously used along with the Rethlas mathematical reasoning agent \cite{ju2026automated} for more than 10 hours of automated research and did not solve the problem of interest. After failed attempts, the first algorithm with rate better than $\cO(L/T^2)$ rate was developed later when the author instructed the OpenAI ChatGPT 6 (Astra) model to inspect properties associated with weak $L^1$ spaces and the good-bad part decomposition idea of harmonic analysis. The convergence rate of the first algorithm was $\cO(L\log T/T^3)$. The current version of algorithm with $\cO(L/T^3)$ rate was developed after the author instructed the Astra model to utilize the techniques appearing in a recent harmonic analysis result in \cite{ouyang2026dimension}. 
Both the Sol and Astra models were used to assist with manuscript preparation. The author have independently verified, validated, and rewritten the paper in its entirety, and takes full responsibility for the mathematical contents of the paper.

\bibliographystyle{plain}
\bibliography{article}
\end{document}